\newtheorem{thm}{Theorem}[section]
\newtheorem{lem}[thm]{Lemma}
\newtheorem{cor}[thm]{Corollary}
\newtheorem{qu}[thm]{Question}
\theoremstyle{definition}
\newtheorem{definition}[thm]{Definition}
\newtheorem{example}[thm]{Example}
\theoremstyle{remark}
\numberwithin{equation}{section}
\newcommand{\R}{\mathbb{R}} 
\newcommand{\C}{\mathbb{C}} 
\newcommand{\Z}{\mathbb{Z}}
\newcommand{\G}{\mathbb{G}}
\newcommand{\N}{\mathbb{N}}
\newcommand{\T}{\mathbb{T}}
\newcommand{\Sf}{S^{2n-1}}
\newcommand{\HH}{\mathcal{H}}
\newcommand{\dH}{\dim_{\mathcal{H}}}
\DeclareMathOperator{\M}{M}
\DeclareMathOperator{\dm}{d}
\DeclareMathOperator{\W}{WF}
\DeclareMathOperator{\supp}{supp}
\DeclareMathOperator{\spec}{spec}
\DeclareMathOperator{\im}{im}
\DeclareMathOperator{\Char}{Char}
\newcommand{\mres}{%
  \,\raisebox{-.127ex}{\reflectbox{\rotatebox[origin=br]{-90}{$\lnot$}}}\,%
 }
\begin{document}

\title[Microlocal approach to the Hausdorff dimension of measures]{Microlocal approach to the Hausdorff dimension of measures}

\author{R. Ayoush}
\address{Institute of Mathematics, Polish Academy of Sciences
00-656 Warszawa, Poland}
\email{rayoush@impan.pl}

\author{M. Wojciechowski}
\address{Institute of Mathematics, Polish Academy of Sciences
00-656 Warszawa, Poland}
\email{miwoj.impan@gmail.com}

\subjclass[2000]{28A78, 31C10, 42B10, 43A85}
\keywords{wave front set, uncertainty principle, spectral gap, Hausdorff dimension, complex spherical harmonics, pluriharmonic measures}

\begin{abstract}
In this paper we study the dependence of geometric properties of Radon measures, such as Hausdorff dimension and rectifiability of singular sets, on the wavefront set.  We prove our results by adapting the method of Brummelhuis to the non-analytic case. As an application we obtain a general form of uncertainty principle for measures on the complex sphere which subsumes certain classical results about pluriharmonic measures.
\end{abstract}

\maketitle

\section{Introduction}

The purpose of this paper is to extend the programme of \cite{Br} to the case of singular measures in a quantitative way. Namely, in the mentioned paper it is presented how to derive analyticity of measures (in the sense of belonging to the local Hardy-Goldberg space) from the knowledge about their wave front sets (\cite{Br}, Theorem 1.4.). This was obtained by translating properties of Riesz sets to the microlocal setting. Let us recall that a closed set $A\subset{\R^{n}}$ is called a Riesz set, if any $\mu \in \M(\R^{n})$ (the set of finite, Borel regular measures) such that $\supp(\hat{\mu}) \subset A$ is absolutely continuous with respect to the Lebesgue measure. The key point in our modification is the replacement of this notion by the one which allows to control the dimension of singularities:

\begin{definition}
	For any Radon measure~$\mu$ on $\R^{n}$ we define its lower Hausdorff dimension by
	\begin{equation*}
	\dH(\mu) = \inf \{\alpha\colon \hbox{there exists a Borel set~$E$ such that} \ \dH E \leq \alpha \ \hbox{and} \ \mu(E) \neq 0  \}.
	\end{equation*}
\end{definition}

\begin{definition}(\cite{RW})
	We call a closed set $A\subset \R^{n}$ an $s$-Riesz set when $\dH(\mu) \geq s$ for each $\mu \in \M(\R^{n})$ such that $\supp(\hat{\mu}) \subset A$.
\end{definition}

This operation yields the following uncertainty principle which expresses a duality between the size of the wave front set and the Hausdorff dimension:

\begin{definition}
We say that a set $F$ has a $k$--dimensional gap if there exists a $k$--dimensional linear space $V \subset \R^{n}$ with a conic neighbourhood $N_{V}$ such that
\begin{equation*}
	F \cap N_{V} \setminus B(0,r) = \emptyset
\end{equation*}
for some ball $B(0,r)$.
\end{definition}
\begin{thm}\label{microlocal}
	Let $\mu$ be a Radon measure on $\R^{n}$ such that  $\W_{x}(\mu)$ has a $k$--dimensional gap at $\mu$--almost every $x\in \R^{n}$. Then
\begin{equation}\label{UP_ineq}
\dH(\mu) \geq k.
\end{equation}
Moreover, if a $k$--dimensional Borel set $E\subset \R^{n}$ satisfies $\HH^{k}(E) <+\infty$ and \ \ \ $\mu(E) \neq 0$, then there exists a $k$--rectifiable  set $E_{r}\subset E$ such that $|\mu|(E\setminus E_{r})=0$.
\end{thm}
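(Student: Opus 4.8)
My plan is to microlocalise the hypothesis away from a $\mu$-null \emph{closed} set, project each localised piece onto the $k$-plane along which its Fourier transform decays, and read off absolute continuity with respect to $\HH^{k}$ together with — after arranging the plane to be Besicovitch--Federer generic — the rectifiability of the mass. Let $Z=\{x:\W_{x}(\mu)\ \text{has no }k\text{-dimensional gap}\}$. The first thing I would prove is that $Z$ is closed. Indeed, if $x_{0}\notin Z$ with gap plane $V$ and open conic neighbourhood $N_{V}$, choose a smaller open conic neighbourhood $N_{V}'$ of $V$ with $\overline{N_{V}'}\setminus\{0\}\subset N_{V}$; then $\{x_{0}\}\times(\overline{N_{V}'}\cap S^{n-1})$ is compact and disjoint from the closed set $\W(\mu)\subset\R^{n}\times(\R^{n}\setminus\{0\})$, hence has a product neighbourhood $U\times\Omega$ disjoint from $\W(\mu)$, and every $x\in U$ then admits a $k$-dimensional gap with the \emph{same} plane $V$ and cone $\mathrm{cone}(\Omega)$. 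Since $\mu(Z)=0$ and $Z^{c}$ is open, I cover $Z^{c}$ by countably many balls $B_{j}\subset Z^{c}$, each so small that there are a single $k$-plane $V_{j}$ and conic neighbourhood $N_{V_{j}}$ with $\W_{x}(\mu)\cap N_{V_{j}}=\emptyset$ for all $x\in B_{j}$, take a smooth partition of unity $\{\psi_{j}\}$ subordinate to $\{B_{j}\}$ (so $\sum_{j}\psi_{j}=\mathbf 1_{Z^{c}}$ and each $\psi_{j}$ vanishes near $Z$), and set $\nu_{j}:=\psi_{j}\mu$. Then $\mu=\sum_{j}\nu_{j}$ because $\mu(Z)=0$, each $\nu_{j}$ is a finite compactly supported measure, and $\W(\nu_{j})$ contains no covector with direction in $N_{V_{j}}$ because $\supp\nu_{j}\subset B_{j}$.

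Next, on the Fourier side: as $\nu_{j}\in\mathcal E'(\R^{n})$ and no direction in $N_{V_{j}}$ occurs in $\W(\nu_{j})$, the transform $\widehat{\nu_{j}}$ is rapidly decreasing on every closed subcone of $N_{V_{j}}$, in particular along $V_{j}$. In coordinates $\xi=(\xi',\xi'')$ adapted to $\R^{n}=V_{j}\oplus V_{j}^{\perp}$, the orthogonal projection $\pi_{V_{j}}\colon\R^{n}\to V_{j}$ satisfies $\widehat{\pi_{V_{j}\,*}\nu_{j}}(\xi')=\widehat{\nu_{j}}(\xi',0)$, which is therefore integrable on $V_{j}\cong\R^{k}$; by Fourier inversion $\pi_{V_{j}\,*}\nu_{j}$ is absolutely continuous with a bounded density $h_{j}$. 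Since $\pi_{V_{j}}$ is $1$-Lipschitz, $B(x,r)\subset\pi_{V_{j}}^{-1}\bigl(B_{V_{j}}(\pi_{V_{j}}x,r)\bigr)$ for all $x,r$, so
\begin{equation*}
\nu_{j}\bigl(B(x,r)\bigr)\le\bigl(\pi_{V_{j}\,*}\nu_{j}\bigr)\bigl(B_{V_{j}}(\pi_{V_{j}}x,r)\bigr)\le\|h_{j}\|_{\infty}\,\omega_{k}\,r^{k}.
\end{equation*}
Thus the upper $k$-density of $\nu_{j}$ is bounded on $\R^{n}$, and by the classical comparison of upper densities with Hausdorff measure $\nu_{j}\le C_{j}\HH^{k}$; summing, $\mu=\sum_{j}\nu_{j}\ll\HH^{k}$. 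In particular $\mu(E)\ne0$ forces $\HH^{k}(E)>0$, hence $\dH E\ge k$, which is \eqref{UP_ineq}. (This replaces the alternative of invoking \cite{RW} to the effect that $\R^{n}\setminus N_{V_{j}}$ is a $k$-Riesz set, which would give $\dH(\nu_{j})\ge k$ at once.)

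For the rectifiability claim, decompose $E=E_{r}\cup E_{pu}$ by the Besicovitch structure theorem, with $E_{r}$ $k$-rectifiable and $E_{pu}$ purely $k$-unrectifiable, both of finite $\HH^{k}$ measure; it suffices to prove $|\mu|(E_{pu})=0$. I repeat the construction above, but now refine the choice of $V_{j}$ within the set of planes that are admissible for $B_{j}$ (i.e.\ that possess a conic neighbourhood disjoint from $\W_{x}(\mu)$ for all $x\in B_{j}$): by the same argument that shows $Z^{c}$ is open, this set is a neighbourhood of the original $V_{j}$ in the Grassmannian $G(n,k)$, hence has positive measure, so it meets the full-measure set of planes $W$ with $\mathcal L^{k}(\pi_{W}(E_{pu}))=0$ furnished by the Besicovitch--Federer projection theorem. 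Choosing $V_{j}$ there, and using $\pi_{V_{j}\,*}\nu_{j}\ll\mathcal L^{k}$ together with $E_{pu}\subset\pi_{V_{j}}^{-1}\pi_{V_{j}}(E_{pu})$,
\begin{equation*}
\nu_{j}(E_{pu})\le\bigl(\pi_{V_{j}\,*}\nu_{j}\bigr)\bigl(\pi_{V_{j}}(E_{pu})\bigr)=0,
\end{equation*}
and the same bound holds for every Borel subset of $E_{pu}$; summing over $j$ gives $|\mu|(E_{pu})=0$, so $E_{r}$ is the set the theorem asks for.

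The main obstacle, to my mind, is exactly the bookkeeping forced by the \emph{almost-everywhere} nature of the hypothesis: the whole reduction hinges on the exceptional set $Z$ being closed, so that the cut-offs $\psi_{j}$ can be placed strictly away from it and $\W(\nu_{j})\subset\W(\mu)$ inherits the gap uniformly on $\supp\nu_{j}$; without this one cannot pass from the pointwise microlocal data to a usable Fourier statement. A second, more delicate point (needed only for the rectifiable part) is that the gap directions are rigidly attached to $\mu$, and one must show they are flexible enough to be taken generic for Besicovitch--Federer — which is precisely what the openness of the admissible family of planes provides. Finally, for signed or complex $\mu$ one must carry the polar decomposition through the density estimate to control $|\mu|$ rather than just $\mu$; for a nonnegative Radon measure, as written above, $|\mu|=\mu$ and this point is vacuous.
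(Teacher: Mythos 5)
Your localization scheme --- closedness of the exceptional set $Z$, a countable partition of unity on the open set $Z^{c}$ with a single gap plane $V_{j}$ per ball, and rapid decay of $\widehat{\nu_{j}}$ along a closed subcone of $N_{V_{j}}$ giving $\pi_{V_{j}*}\nu_{j}\ll\mathcal{L}^{k}$ with bounded density --- is sound, and it is organized differently from the paper's proof, which localizes pointwise via property~\eqref{property3}, subtracts a Schwartz correction so that the Fourier transform of the corrected $\phi\mu$ lies in $S_{V,\alpha,\beta}$, and then invokes the $k$--Riesz property of that set (Theorems~\ref{RW}--\ref{BesFEd}, resting on \cite{RW}), finishing by compactness and contradiction. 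For \emph{nonnegative} $\mu$ your route is correct, more self-contained, and even yields the stronger conclusion $\mu\ll\HH^{k}$; your openness-in-the-Grassmannian argument for the Besicovitch--Federer step is the same as in Theorem~\ref{BesFEd}.

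The genuine gap is the signed/complex case, which the theorem must cover (its conclusion is phrased with $|\mu|$, and it is applied to complex measures on $\Sf$). Both of your key inequalities,
\[
\nu_{j}\bigl(B(x,r)\bigr)\le\bigl(\pi_{V_{j}*}\nu_{j}\bigr)\bigl(B_{V_{j}}(\pi_{V_{j}}x,r)\bigr),
\qquad
\nu_{j}(F)\le\bigl(\pi_{V_{j}*}\nu_{j}\bigr)\bigl(\pi_{V_{j}}(F)\bigr),
\]
require $\nu_{j}\ge 0$: for a signed $\nu_{j}$ the pushforward can vanish identically by cancellation along the fibres of $\pi_{V_{j}}$ while $|\nu_{j}|$ is large, so absolute continuity of $\pi_{V_{j}*}\nu_{j}$ alone controls neither the upper $k$--density of $|\nu_{j}|$ nor $|\nu_{j}|(E_{pu})$. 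Your closing suggestion to ``carry the polar decomposition through'' does not repair this: writing $\dm\mu=u\,\dm|\mu|$ with $|u|=1$ produces a merely Borel $u$, and $\W(|\mu|)$ is not controlled by $\W(\mu)$, so the microlocal hypothesis is lost for $|\mu|$. The correct repair is the modulation trick of Theorem~\ref{RW_proj}: your rapid-decay estimate holds not only on $V_{j}$ but on every translate $V_{j}+a$ (which eventually enters any closed subcone of $N_{V_{j}}$), so $\pi_{V_{j}*}\bigl(e^{-2\pi i\langle a,\cdot\rangle}\nu_{j}\bigr)\ll\mathcal{L}^{k}$ for every $a\in\R^{n}$; then for any $E'$ with $\mathcal{L}^{k}(E')=0$ the Fourier transform of $\nu_{j}\mres\pi_{V_{j}}^{-1}(E')$ vanishes identically, whence $|\nu_{j}|\bigl(\pi_{V_{j}}^{-1}(E')\bigr)=0$. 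Substituting this for your density estimate makes both the dimension bound and the rectifiability step correct for general $\mu$.
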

For the definition and basic properties of rectifiable sets we refer the reader to Chapter 18 in \cite{Mattila}. The second part of the theorem asserts about additional regularity of sets minimizing \eqref{UP_ineq}. The above may be thought as a substitute for $k$-Riesz sets in the  non-Euclidean context. In particular, it has several consequences in the study of measures on the complex sphere. Fine properties of such measures were studied, among others,  in \cite{Ale}, \cite{Do}, \cite{Do2}, \cite{Do3}, \cite{For}, \cite{For2}, \cite{HR}, \cite{JR} and \cite{Rog} . 

To state our results, we need to recall some basic notions from the harmonic analysis on $\Sf = \{z \in \C^{n}\colon |z| = 1\}$ (see Chapter 12 in \cite{Rudin} for detailed informations about this topic). In the considerations below  we treat $\Sf$ as a $(2n-1)$--dimensional submanifold of $\R^{2n}$ and the Hausdorff dimension is computed with respect to the Euclidean metric on $\R^{2n}$. As previously, we denote by $\M(\Sf)$ the set of finite, Borel regular measures on $\Sf$. By $\Z_{+}$ we understand the set of non-negative integers, and for $(p,q) \in \Z_{+}^{2}$ the symbol $H(p,q)$ stands for the space of restrictions to $\Sf$ of all harmonic polynomials in $\C^{n}$ which are of degree $p$ in $z_{1},\dots,z_{n}$ and of degree $q$ in $\bar{z_{1}}, \dots, \bar{z_{n}}$. Those spaces form an orthogonal decomposition $L^{2}(\Sf, \sigma) = \bigoplus_{p,q \geq 0} H(p,q)$, where $\sigma$ is the Haar measure on $\Sf$. We call $\pi_{p,q}: L^{2}(\Sf,\sigma) \to H(p,q)$ the orthogonal projection onto $H(p,q)$. This transformation is given by the reproducing kernel $K_{p,q}$ (see \cite{Ale}, p. 118 for the explicit formula) and can be continued to the space of finite measures. This leads to the below definition of spectrum:

\begin{definition}
	For any $\mu \in \M(\Sf)$ we define
	\begin{equation*}
		\spec(\mu) = \Big{\{} (p,q) \in \Z_{+}^{2} \colon \pi_{p,q} \mu (z) = \int_{\Sf} K_{p,q}(z,w) \dm \mu(w) \not\equiv 0 \Big{\}}.
	\end{equation*}  
\end{definition}
Theorem~\ref{microlocal} applies to this setting as follows:
\begin{definition}
	For $E \subset \Sf$ we write $\T \cdot E := \{e^{i t}z \colon z\in E \ \mbox{and} \ t\in[0,2\pi] \}$.
\end{definition}

\begin{thm}\label{UP}
For any $0<\epsilon<1$ let us denote
	\[
	\kappa(\epsilon):= \Big{\{} (x,y) \in (0,+\infty)^{2} \colon 1-\epsilon < \frac{y}{x} < \frac{1}{1-\epsilon} \Big{\}}.
	\]
	 Then, any measure $\mu \in \M(\Sf)$ such that $\spec(\mu) \cap \kappa(\epsilon)$ is finite or empty for some $\epsilon$ satisfies the following regularity property: 
	\begin{equation}\label{strongUP}
		|\mu|(\T \cdot E) = 0 \quad \mbox{if} \ \ \HH^{2n-2}(E) = 0.
	\end{equation}
	Moreover, if  $\mu(E) \neq 0$ for some $(2n-2)$--dimensional Borel set $E\subset \Sf$ such that $\HH^{2n-2}(E) <+\infty$, then there exists a $(2n-2)$--rectifiable  set $E_{r}\subset E$ such that $|\mu|(E\setminus E_{r})=0$.
\end{thm}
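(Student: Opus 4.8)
The plan is to split the statement: the size estimate~\eqref{strongUP} will come from a soft argument exploiting the $\T$-action, while Theorem~\ref{microlocal} will be used only for the rectifiability claim.

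The first step is to analyse the Fourier modes of $\mu$ under $z\mapsto e^{it}z$. Put $\mu_m=\frac{1}{2\pi}\int_0^{2\pi}e^{-imt}(e^{it})_*\mu\,dt$, the $m$-th isotypic component, so $\mu=\sum_{m\in\Z}\mu_m$. Since $H(p,q)$ is the $e^{i(p-q)t}$-eigenspace of the action, $\spec(\mu_m)=\spec(\mu)\cap\{p-q=m\}$. An elementary computation shows that for every fixed $m$ the set $\{(p,q):p-q=m\}\setminus\kappa(\epsilon)$ is finite --- for $m\ge 0$ it consists of the points $(p,p-m)$ with $p\le m/\epsilon$, and symmetrically for $m<0$ --- so the hypothesis forces $\spec(\mu_m)$ to be finite. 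Hence each $\mu_m$ is a finite combination of elements of the spaces $H(p,q)$; in particular $\mu_m=F_m\,d\sigma$ with $F_m$ the restriction of a polynomial, so $F_m\in C^\infty(\Sf)$, and moreover $\mu_m$ is band--limited with band $O_\epsilon(|m|)$.

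Next I would prove~\eqref{strongUP}. Let $\pi\colon\Sf\to\C P^{n-1}$ be the Hopf map, so $\T\cdot E=\pi^{-1}(\pi(E))$. As $\pi$ is Lipschitz, $\HH^{2n-2}(E)=0$ gives $\HH^{2n-2}(\pi(E))=0$, and by the coarea formula for the Riemannian submersion $\pi$ (whose fibres are great circles of equal length) this yields $\sigma(\T\cdot E)=0$. Put $\mu':=\mu\mres(\T\cdot E)=(\1_{\pi(E)}\circ\pi)\,\mu$. Since $\1_{\pi(E)}\circ\pi$ is $\T$--invariant, the isotypic components of $\mu'$ are $\mu'_m=(\1_{\pi(E)}\circ\pi)F_m\,d\sigma$, which are absolutely continuous measures carried by the $\sigma$--null set $\T\cdot E$ and hence vanish. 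Therefore every Fej\'er mean $\sum_{|m|\le N}(1-|m|/N)\mu'_m$ is zero, and since these converge to $\mu'$ weakly--$*$ we obtain $\mu'=0$, i.e.\ $|\mu|(\T\cdot E)=0$. Running this with $E$ of dimension $<2n-2$ also gives $\dH(\mu)\ge 2n-2$.

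For the rectifiability assertion I would verify the hypothesis of Theorem~\ref{microlocal} with $k=2n-2$: for every $x\in\Sf$ the set $\W_x(\mu)$ should avoid a conic neighbourhood of the complex tangent space $H_x=T_x\Sf\cap i\,T_x\Sf=(\C x)^\perp$, which is a $(2n-2)$--dimensional subspace. This is the technical core, resting on the dictionary between bidegrees and directional frequencies: a bidegree--$(p,q)$ spherical harmonic oscillates at frequency $\sim|p-q|$ along the Hopf fibre and at frequency $\sim\sqrt{pq}$ in the complex--tangential directions $H_x$, while $\R x$ carries the radial frequency; since the complement of $\kappa(\epsilon)$ is essentially $\{|p-q|\gtrsim p+q\}$, the spectrum of $\mu$ never sustains a purely complex--tangential high frequency. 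Quantitatively, for a cutoff $\chi$ supported in a small cap around $x$ one expands $\widehat{\chi\mu}(\xi)$ by the Hecke--Bochner formula $\widehat{Y\,d\sigma}(\xi)=c_n(-i)^{p+q}|\xi|^{-(n-1)}J_{n-1+p+q}(|\xi|)\,Y(\xi/|\xi|)$ applied to each $\pi_{p,q}\mu$, and for $\xi$ in a thin cone about $H_x$ one discards: the modes with $p+q\gg|\xi|$ by the super--exponential smallness of $J_\nu(|\xi|)$ for $\nu\gg|\xi|$; and the modes with $|p-q|$ beyond the negligible fibre component of $\xi$ by non--stationary phase along the fibres, where $\mu_m$ oscillates exactly at frequency $m=p-q$. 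The surviving modes would need $|p-q|\gtrsim p+q\gtrsim|\xi|$ and so contribute nothing, whence $\widehat{\chi\mu}(\xi)$ decays rapidly. The main obstacle is to run this estimate uniformly over $m$, the difficulty being that $\|\mu_m\|_{C^N}$ grows polynomially in $p+q\sim|m|$; this is absorbed by combining the band bound from the first step, the trivial estimate $|\widehat{\mu_m}|\le\|\mu\|$, and the non--conormality of $\xi$ relative to the cap, so that the ranges $|m|\gtrsim|\xi|$ and $|m|\lesssim|\xi|$ are handled by Bessel decay and by stationary--phase decay of a fixed smooth density respectively. Once the gap is established, Theorem~\ref{microlocal} supplies the required $(2n-2)$--rectifiable set $E_r\subset E$ with $|\mu|(E\setminus E_r)=0$, completing the proof.
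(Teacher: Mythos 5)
Your argument splits into two halves that deserve separate verdicts. The proof of \eqref{strongUP} via the isotypic decomposition $\mu=\sum_m\mu_m$ under the circle action is correct and is a genuinely different, more elementary route than the paper's: each line $\{p-q=m\}$ meets the complement of $\kappa(\epsilon)$ in $O(m/\epsilon)$ points, so each $\mu_m$ has finite spectrum and is a smooth density $F_m\,d\sigma$; since $\mathbf{1}_{\pi(E)}\circ\pi$ is $\T$--invariant it commutes with taking isotypic components, so every component of $\mu\mres(\T\cdot E)$ is absolutely continuous yet carried by a $\sigma$--null set, hence zero, and Fej\'er summation finishes. This bypasses microlocal analysis entirely for the first assertion, whereas the paper derives \eqref{strongUP} from the wave front gap together with the foliation-adapted charts of Lemma~\ref{maps} and Theorem~\ref{RW_proj}. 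Your route buys simplicity; the paper's buys a statement already in the form needed for the Besicovitch--Federer step. Note that your soft argument gives nothing for the rectifiability claim, since a purely unrectifiable piece of $E$ is not $\T$--saturated, so that half rests entirely on establishing the wave front gap.

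That is where your sketch has a genuine gap. First, the Hecke--Bochner formula applies to $\pi_{p,q}\mu\,d\sigma$ on the whole sphere, not to $\chi\cdot\pi_{p,q}\mu\,d\sigma$; without the cutoff the stationary points $\pm\xi/|\xi|$ of the phase contribute $|\xi|^{-(2n-1)/2}$ per mode with no rapid decay, and with the cutoff the exact Bessel structure is lost and everything becomes a non-stationary-phase estimate whose constants involve $\|F_m\|_{C^N}\sim|m|^{N+O(1)}$. Your two endpoint estimates then do not close. For $|m|\gtrsim|\xi|$, super-exponential Bessel decay requires $\nu=n-1+p+q\gg|\xi|$, but the spectral hypothesis only gives $p+q\le C_\epsilon|m|$, so in the regime $|m|\sim|\xi|$, $p+q\sim|\xi|$ one sits in the transitional range where $J_\nu(\nu)\sim\nu^{-1/3}$ and there is no rapid decay. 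For $|m|\lesssim|\xi|$, the bound $\|F_m\|_{C^N}|\xi|^{-N}$ summed over $|m|\le C|\xi|$ yields $|\xi|^{O(1)}$, not rapid decay; the density is not ``fixed''. No interpolation bridging these ranges is supplied, and this uniformity-over-modes problem is exactly the hard analytic content. The paper outsources it to Strichartz's functional calculus: $T_3=\chi\big(\frac{T_1+T_2}{2},\frac{T_2-T_1}{2}\big)$ is a genuine zeroth-order pseudodifferential operator with principal symbol \eqref{principal}, $T_3\mu\in C^{\infty}(\Sf)$ by the spectral hypothesis, and microlocal elliptic regularity $\W(\mu)\subset\W(T_3\mu)\cup\Char(T_3)$ delivers the gap with no mode-by-mode summation. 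To complete your outline you must either prove the uniform estimate in the transitional regime (essentially re-proving the relevant case of that calculus) or cite it. A smaller omission: Theorem~\ref{microlocal} is stated for measures on $\R^{n}$, so even once the gap is known you still need the transfer via Lemmas~\ref{technical} and~\ref{maps} (or an explicit remark that the gap condition is preserved under the differentials of charts) before invoking it.
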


\begin{cor}\label{UP2}
	For any $\mu \in \M(\Sf)$ satisfying the assumptions of Theorem \ref{UP} we have
	\begin{equation*}
		\dH(\mu) \geq 2n-2.
	\end{equation*}
\end{cor}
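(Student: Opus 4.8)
The plan is to deduce Corollary~\ref{UP2} directly from Theorem~\ref{UP}. Note that the definition of $\dH(\mu)$ is an infimum of exponents $\alpha$ for which there is a Borel set $E$ with $\dH E \leq \alpha$ and $\mu(E) \neq 0$. So it suffices to show that no Borel set $E$ with $\dH E < 2n-2$ can carry a nonzero portion of $\mu$; equivalently, every Borel set $E$ with $\mu(E) \neq 0$ satisfies $\dH E \geq 2n-2$. By the regularity property \eqref{strongUP} in Theorem~\ref{UP}, it is therefore enough to exhibit, for any $E$ with $\mu(E)\neq 0$, a representation or covering that forces $\HH^{2n-2}$ to be positive on a torus-saturated piece.

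First I would argue by contradiction: suppose $\dH(\mu) < 2n-2$, so there is a Borel set $E$ with $\mu(E)\neq 0$ and $\dH E < 2n-2$. Since $\mu$ is concentrated (up to null sets) on $E$, and since the orbit map $z \mapsto \T\cdot z$ only enlarges sets, I would pass to $\widetilde E := \T \cdot E$. The key geometric observation is that the Hausdorff dimension increases by at most one under the torus saturation: $\dH(\T\cdot E) \leq \dH E + 1 < 2n-1$. This is the standard fact that $\dH(A\times B)$ (or the image of $A$ under a Lipschitz fibration with one-dimensional $\T$-fibres) is bounded by $\dH A + \dim \T$; here one covers $E$ by sets of small diameter, rotates each cover element around the circle, and notes that a set of diameter $\delta$ sweeps out, under the $\T$-action, a set coverable by $O(1/\delta)$ balls of radius $\delta$, giving the bound on the $(s+1)$-dimensional premeasure. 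Consequently $\HH^{2n-2}(\T\cdot E) = 0$ because $2n-2 > \dH(\T\cdot E)$ would be false only if... wait — more carefully: $\dH(\T\cdot E) < 2n-1$ does not by itself give $\HH^{2n-2}(\T\cdot E)=0$. Instead I would use that $\dH E < 2n-2$ gives $\dH(\T\cdot E) < 2n-1$, but to apply \eqref{strongUP} I need $\HH^{2n-2}(E)=0$, which follows already from $\dH E < 2n-2$. So in fact the torus argument is only needed to identify $E$ with $\T\cdot E'$ for some $E'$, or one applies \eqref{strongUP} with $E$ itself after noting $\mu$ is $\T$-quasi-invariant in the relevant sense; the cleanest route is: $\dH E < 2n-2 \Rightarrow \HH^{2n-2}(E) = 0 \Rightarrow \HH^{2n-2}(\T\cdot E') = 0$ where $E = \T \cdot E'$ (taking $E' = E$ is harmless since $\T\cdot E \supseteq E$ and we may replace $E$ by its saturation, whose dimension is still $< 2n-1$, hence whose $\HH^{2n-1}$-measure vanishes — but we need the $(2n-2)$-statement).

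Let me restate the logic I would actually write down: replace $E$ by $E^* := \T\cdot E$; then $\mu(E^*) \neq 0$, and $\HH^{2n-2}(E) = 0$ since $\dH E < 2n-2$. Now $E^* = \T\cdot E$ has the form appearing in \eqref{strongUP}, so Theorem~\ref{UP} yields $|\mu|(E^*) = 0$ provided $\HH^{2n-2}(E) = 0$ — which holds. This contradicts $\mu(E^*)\neq 0$ (using $|\mu|(E^*) \geq |\mu(E^*)| > 0$, since $\mu(E)\neq 0$ and $E\subseteq E^*$). Hence $\dH(\mu) \geq 2n-2$. The only genuinely nontrivial input is Theorem~\ref{UP} itself, which is assumed; the rest is the bookkeeping above plus the elementary remark that $\dH E < 2n-2 \Rightarrow \HH^{2n-2}(E)=0$, a standard property of Hausdorff measures. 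I expect the main subtlety to be purely notational: making sure the set fed into \eqref{strongUP} is literally a torus orbit $\T\cdot E$ with $\HH^{2n-2}(E)=0$, and that $|\mu|$ vs.\ $\mu$ on that set interact correctly; there is no serious obstacle, and the corollary is essentially a formal consequence.
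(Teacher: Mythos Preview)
Your approach is correct and is the intended one: Corollary~\ref{UP2} is presented in the paper as an immediate consequence of the regularity property \eqref{strongUP} in Theorem~\ref{UP}, and the paper gives no separate proof. The clean version of the argument is exactly the one buried inside your discussion: if $\dH E < 2n-2$ then $\HH^{2n-2}(E)=0$, hence $|\mu|(\T\cdot E)=0$ by \eqref{strongUP}, and since $E\subset \T\cdot E$ this forces $|\mu|(E)=0$ and therefore $\mu(E)=0$.

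There is, however, one genuine slip in the version you wrote down. You conclude the contradiction from ``$|\mu|(E^*) \geq |\mu(E^*)| > 0$, since $\mu(E)\neq 0$ and $E\subseteq E^*$''. For a signed (or complex) measure the implication $\mu(E)\neq 0,\ E\subseteq E^* \Rightarrow \mu(E^*)\neq 0$ is false in general: cancellation on $E^*\setminus E$ can kill $\mu(E^*)$. The correct chain is the one above: from $|\mu|(E^*)=0$ and $E\subseteq E^*$ deduce $|\mu|(E)=0$, hence $\mu(E)=0$, contradicting the choice of $E$. Once you reroute the inequality through $|\mu|$ rather than $\mu$, the argument is complete. (A minor side remark: to ensure $\T\cdot E$ is $|\mu|$-measurable you may first replace $E$ by a compact subset with $\mu$-value still nonzero, using regularity of $\mu$; then $\T\cdot E$ is compact.) All the detours in your write-up about $\dH(\T\cdot E)\le \dH E+1$ are unnecessary and can be deleted.
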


In comparison with Theorem 2.1. in \cite{Br}, the above says that, even after dropping the assumption about strong antisymmetry of spectrum, we can still maintain high regularity under relatively weak Fourier constraints. Natural examples of measures that satisfy them are the so-called pluriharmonic measures. They correspond to the case when the spectrum lies in the sum of horizontal and vertical ray, i.e.
\[
\spec(\mu) \subset \{(p,0) \colon p \in \Z_{+}\} \cup \{(0,q) \colon q \in \Z_{+} \}
\]
 (cf. also Example~\ref{dpluri} in the last chapter). For this case, property~\eqref{strongUP} was proved in full generailty by Aleksandrov (\cite{Ale},  Theorem 3.1.2.), and by Forelli under additional assumption about positivity (\cite{For}, Corollary 1.11.). Let us point out that, by Proposition 3.3.1. in \cite{Ale},  positive pluriharmonic measures must vanish on $(2n-2)$--dimensional $C^{1}$ submanifolds of $\Sf$. This and Theorem~\ref{UP} imply 

\begin{cor}\label{pospluri}
If $\mu$ is a positive pluriharmonic measure, then $\mu(E)=0$ for any $(2n-2)$--dimensional set $E$ such that $\HH^{2n-2}(E) < +\infty$.
\end{cor}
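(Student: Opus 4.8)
The plan is to deduce Corollary~\ref{pospluri} from Theorem~\ref{UP} together with the structural result of Aleksandrov (Proposition 3.3.1. in \cite{Ale}) cited just before the statement. Let $\mu$ be a positive pluriharmonic measure on $\Sf$; by definition its spectrum lies in the union of the horizontal and vertical rays $\{(p,0)\} \cup \{(0,q)\}$, so in particular $\spec(\mu) \cap \kappa(\epsilon) = \emptyset$ for every $\epsilon \in (0,1)$ (the cone $\kappa(\epsilon)$ lies strictly inside the open first quadrant $(0,+\infty)^2$ and hence misses both coordinate axes). Thus $\mu$ satisfies the hypotheses of Theorem~\ref{UP}.

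The argument is then a short dichotomy. Suppose, for contradiction, that there is a $(2n-2)$--dimensional Borel set $E \subset \Sf$ with $\HH^{2n-2}(E) < +\infty$ and $\mu(E) \neq 0$. Since $\mu \geq 0$, after intersecting with the closed support we may assume $E$ is chosen so that $|\mu|(E) = \mu(E) > 0$. By the second (rectifiability) part of Theorem~\ref{UP} there exists a $(2n-2)$--rectifiable set $E_r \subset E$ with $|\mu|(E \setminus E_r) = 0$, hence $\mu(E_r) = \mu(E) > 0$. A $(2n-2)$--rectifiable set of finite $\HH^{2n-2}$--measure can, up to an $\HH^{2n-2}$--null set, be covered by countably many $(2n-2)$--dimensional $C^1$ submanifolds $M_j$ of $\R^{2n}$ (one may take them to be $C^1$ graphs); intersecting with $\Sf$ and discarding the null remainder, we get $\mu\big(E_r \setminus \bigcup_j (M_j \cap \Sf)\big) = 0$, so $\mu(M_{j_0} \cap \Sf) > 0$ for some $j_0$. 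But Proposition 3.3.1. in \cite{Ale} asserts that a positive pluriharmonic measure vanishes on every $(2n-2)$--dimensional $C^1$ submanifold of $\Sf$, a contradiction. Therefore no such $E$ exists, which is exactly the assertion of the corollary.

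The only mild subtlety — and the point I would be most careful about — is the passage from ``$(2n-2)$--rectifiable with finite $\HH^{2n-2}$--measure'' to ``contained, modulo a null set, in a countable union of $(2n-2)$--dimensional $C^1$ submanifolds of $\Sf$.'' The standard structure theory for rectifiable sets (see Chapter 18 in \cite{Mattila}) gives a covering by Lipschitz images of $\R^{2n-2}$; upgrading these to $C^1$ submanifolds uses Whitney's extension / Lusin-type approximation of Lipschitz maps by $C^1$ maps, which changes the set only by an arbitrarily small $\HH^{2n-2}$--measure, and this suffices since we only need one piece to carry positive $\mu$--mass. One should also note that a $C^1$ submanifold of $\R^{2n}$ intersected with the $C^1$ hypersurface $\Sf$ is, away from a lower-dimensional bad set, again a $C^1$ submanifold of $\Sf$ of the same dimension when the intersection is transverse, and the non-transverse locus is $\HH^{2n-2}$--null; alternatively one invokes Aleksandrov's proposition in the ambient form directly. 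None of this requires positivity beyond its single use at the end, so the mechanism is genuinely: Theorem~\ref{UP} forces the hypothetical mass onto a $C^1$ submanifold, and Aleksandrov's rigidity for positive pluriharmonic measures forbids that.
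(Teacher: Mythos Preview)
Your proposal is correct and follows essentially the same route as the paper: reduce via the rectifiability clause of Theorem~\ref{UP} to a $(2n-2)$--rectifiable set, invoke the Whitney extension theorem to cover it (up to a null set) by countably many $(2n-2)$--dimensional $C^{1}$ submanifolds, and then apply Aleksandrov's Proposition~3.3.1. The paper's proof is simply a two-line version of what you wrote; your additional care about the spectrum hypothesis and about the submanifolds lying in $\Sf$ rather than in $\R^{2n}$ is well placed but does not constitute a different argument.
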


\section{$s$-Riesz and their properties}

In this chapter we list several theorems that we microlocalize in further steps.

\begin{thm}\label{RW} Let $V\subset \R^{n}$ be a $k$--dimensional subspace, $\alpha,\beta \in (0,+\infty)$ and let $S_{V,\alpha,\beta}$ be the complement of the set
\begin{equation*}
\{(\xi_{1},\xi_{2})\in V \times V^{\perp} \colon |\xi_{1}| \geq \alpha, \ |\xi_{2}| \leq  \beta|\xi_{1}|\}.
\end{equation*}
Then $S_{V,\alpha, \beta}$ is a $k$--Riesz set.	
\end{thm}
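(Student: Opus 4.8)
The plan is to prove that any $\mu\in\M(\R^{n})$ with $\supp(\hat\mu)\subset S_{V,\alpha,\beta}$ satisfies $\dH(\mu)\ge k$, i.e.\ that $\mu(E)=0$ for every Borel set $E\subset\R^{n}$ with $\dH E<k$. The mechanism I would exploit is that the spectral restriction forces the image of $\mu$ under the orthogonal projection $P_{V}\colon\R^{n}\to V$ to be absolutely continuous with respect to $k$-dimensional Lebesgue measure, while any set of Hausdorff dimension below $k$ has Lebesgue-null image in $V\cong\R^{k}$.

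First I would record the elementary observation that $S_{V,\alpha,\beta}\cap V$ is bounded: a vector of $V$ has the form $(\xi_{1},0)$ with $\xi_{1}\in V$, $\xi_{2}=0\in V^{\perp}$, so the inequality $|\xi_{2}|\le\beta|\xi_{1}|$ is automatic and such a point is removed from $S_{V,\alpha,\beta}$ exactly when $|\xi_{1}|\ge\alpha$; hence $S_{V,\alpha,\beta}\cap V\subset\{\xi\in V:|\xi|<\alpha\}$. Identifying $V$ with its dual through the inner product one has $\widehat{(P_{V})_{*}\mu}(\eta)=\hat\mu(\eta)$ for $\eta\in V$, so the Fourier transform of the finite measure $(P_{V})_{*}\mu$ on $\R^{k}$ is a bounded continuous function supported in the ball of radius $\alpha$, hence lies in $L^{1}(\R^{k})$. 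By Paley--Wiener $(P_{V})_{*}\mu$ then has a density which is the restriction to $\R^{k}$ of an entire function of exponential type; in particular $(P_{V})_{*}\mu$ is absolutely continuous. The same works verbatim for every $k$-plane $V'$ in a sufficiently small conic neighbourhood of $V$ in the Grassmannian, because $S_{V,\alpha,\beta}\cap V'$ then stays bounded.

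Now fix a Borel set $E\subset\R^{n}$ with $\dH E<k$. Since $P_{V}$ is $1$-Lipschitz, $\dH(P_{V}E)\le\dH E<k$, and a subset of $\R^{k}$ of Hausdorff dimension strictly below $k$ is Lebesgue-null; thus $P_{V}E$ is Lebesgue-null. Using $E\subset P_{V}^{-1}(P_{V}E)$ together with the absolute continuity of $(P_{V})_{*}\mu$,
\[
\mu(E)\le\mu\bigl(P_{V}^{-1}(P_{V}E)\bigr)=\bigl((P_{V})_{*}\mu\bigr)(P_{V}E)=0 ,
\]
so $\mu(E)=0$ and $\dH(\mu)\ge k$, which is the claim. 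If complex (or signed) measures are to be admitted, the monotonicity step above fails; in that case I would run the argument simultaneously for all $k$-planes $V'$ in the conic neighbourhood of $V$ and for the twisted push-forwards $B\mapsto\int\1_{B}(P_{V'}x)\,e^{-i\langle x,w\rangle}\,\dm\mu(x)$ with $w\in(V')^{\perp}$ --- these are still absolutely continuous on $V'$ by the same Paley--Wiener step, once the neighbourhood is small enough that $S_{V,\alpha,\beta}$ meets each affine slice $w+V'$ in a bounded set --- and then recover $\mu(E)=0$ by reassembling over $V'$, the projection-theoretic input being of the same character as the one underlying the rectifiability conclusion of Theorem~\ref{microlocal}.

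For a positive measure there is essentially no obstacle beyond the two elementary facts above; the only point needing a line of justification is the Paley--Wiener implication, which follows at once from $\hat\nu\in C_{c}\subset L^{1}$ giving $\nu$ a continuous density. The genuine difficulty, and the reason the statement is not entirely trivial, lies in the signed/complex case: one must pass from the vanishing of $\mu$ on the cylinder $P_{V'}^{-1}(P_{V'}E)$ back to its vanishing on $E$ itself, which is where the finer projection theory for general measures enters.
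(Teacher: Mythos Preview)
Your approach is essentially the one the paper takes (for the stronger Theorem~\ref{RW_proj}, which immediately implies Theorem~\ref{RW}): push forward to $V$, use that $S_{V,\alpha,\beta}\cap V$ is bounded to obtain absolute continuity, and for complex measures repeat with the twists $e^{-i\langle\cdot,a\rangle}\mu$. The positive-measure argument you wrote is clean and complete.

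For the signed/complex case, though, you are making it harder than it is. You do not need a neighbourhood of $V$ in the Grassmannian, nor any projection-theoretic input of Besicovitch--Federer type. The paper's route is this: for every $a\in\R^{n}$ the measure $\tau_{a}\mu$ defined by $\dm\tau_{a}\mu=e^{-2\pi i\langle a,\cdot\rangle}\dm\mu$ has push-forward to $V$ with Fourier transform $\xi'\mapsto\hat\mu(\xi'+a)$, supported in the bounded set $S_{V,\alpha,\beta}\cap(V+a)$, hence absolutely continuous. So for $E'=\Pi_{V}(E)$ Lebesgue-null one gets $\tau_{a}\mu(E'\times V^{\perp})=0$ for all $a$, i.e.\ $\widehat{\mu\mres(E'\times V^{\perp})}\equiv 0$, and Fourier uniqueness gives $\mu\mres(E'\times V^{\perp})=0$, whence $|\mu|(E)=0$. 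This is exactly your ``twisted push-forward'' idea, but the ``reassembling'' is nothing more than the injectivity of the Fourier transform on finite measures; the family over $V'$ near $V$ is only needed later for the rectifiability statement (Theorem~\ref{BesFEd}), not here.
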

This is a direct consequence of Theorem 1 in \cite{RW} and is sufficient for proving \eqref{UP_ineq} and Corollary~\ref{UP2}.  However, for our other purposes we need also a slightly stronger form which also follows from the methods applied in \cite{RW}. We enclose its proof for completeness. 

\begin{definition}
Let us call $\Pi_{V}: \R^{n}\rightarrow V$ the orthogonal projection onto $V$.
\end{definition}

\begin{thm}\label{RW_proj}
	Let $V, \alpha, \beta$ be as in Theorem~\ref{RW}. If $\mu \in \M(\R^{n})$ satisfies $\supp(\hat{\mu}) \subset S_{V,\alpha,\beta}$, then
	\begin{equation}\label{proj}
		\HH^{k}_{\mres V}(\Pi_{V}(E)) = 0 \quad \Rightarrow \quad |\mu|(E) = 0.
	\end{equation}
\end{thm}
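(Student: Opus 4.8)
The plan is first to reformulate \eqref{proj}. Since $\Pi_{V}$ is surjective, $|\mu|(E)\le|\mu|(\Pi_{V}^{-1}(\Pi_{V}(E)))=\bigl((\Pi_{V})_{*}|\mu|\bigr)(\Pi_{V}(E))$, and $\HH^{k}_{\mres V}$ is, up to a constant, Lebesgue measure on $V\cong\R^{k}$; hence \eqref{proj} is equivalent to the absolute continuity $(\Pi_{V})_{*}|\mu|\ll\HH^{k}_{\mres V}$, which I will establish (applying the latter to $E=\Pi_{V}^{-1}(N)$ recovers \eqref{proj}). I argue by induction on $k=\dim V$: the case $k=0$ is trivial, and $k=1$ is the heart of the matter and is treated last.

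For the inductive step $k\ge2$, fix a line $\ell\subset V$, write $V=V'\oplus\ell$ with $\dim V'=k-1$, and put $W=\ell\oplus V^{\perp}$. One checks directly that $S_{V,\alpha,\beta}\subset S_{V',\alpha,\beta}$ (the cone removed in forming $S_{V'}$ lies inside the one removed in forming $S_{V}$), so the inductive hypothesis for $V'$ gives $\sigma:=(\Pi_{V'})_{*}|\mu|\ll\HH^{k-1}_{\mres V'}$. Disintegrate $\mu=\int_{V'}\mu_{x'}\,d\sigma(x')$ along $\Pi_{V'}$, with $\mu_{x'}\in\M(W)$ and $\|\mu_{x'}\|\le1$; then also $|\mu|=\int_{V'}|\mu_{x'}|\,d\sigma(x')$. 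Writing $\xi=(\xi',t,\xi'')\in V'\oplus\ell\oplus V^{\perp}$, one has $\hat{\mu}(\xi',t,\xi'')=\int_{V'}e^{-i\langle x',\xi'\rangle}\widehat{\mu_{x'}}(t,\xi'')\,d\sigma(x')$, i.e.\ for fixed $(t,\xi'')$ the map $\xi'\mapsto\hat{\mu}(\xi',t,\xi'')$ is the Fourier transform on $V'$ of the finite measure $\widehat{\mu_{\cdot}}(t,\xi'')\,d\sigma$ (bounded density in $x'$, since $\|\mu_{x'}\|\le1$). If $(t,\xi'')$ lies in the truncated cone $\{\,|t|\ge\alpha,\ |\xi''|\le\beta|t|\,\}$, then $(\xi',t,\xi'')$ lies, for every $\xi'$, in the cone excluded from $S_{V,\alpha,\beta}$ (as $|(\xi',t)|\ge|t|\ge\alpha$ and $|\xi''|\le\beta|t|\le\beta|(\xi',t)|$), so $\hat{\mu}(\cdot,t,\xi'')\equiv0$ and $\widehat{\mu_{x'}}(t,\xi'')=0$ for $\sigma$-a.e.\ $x'$. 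Running over a countable dense set of such $(t,\xi'')$ and using continuity, for $\sigma$-a.e.\ $x'$ the spectrum of $\mu_{x'}$ avoids a truncated cone about $\ell$ in $W$: $\supp\widehat{\mu_{x'}}\subset S_{\ell,\alpha',\beta'}$ formed inside $W$, for suitable $\alpha',\beta'>0$ (slightly worse than $\alpha,\beta$, to absorb a closure). Granting the case $k=1$ in $W$, this gives $(\Pi_{\ell})_{*}|\mu_{x'}|\ll\HH^{1}_{\mres\ell}$ for $\sigma$-a.e.\ $x'$. Finally, if $\HH^{k}_{\mres V}(B)=0$ then Fubini on $V=V'\oplus\ell$ makes the slices $B_{x'}:=\{t\in\ell:x'+t\in B\}$ Lebesgue-null for Lebesgue-a.e.\ $x'$, hence, since $\sigma\ll\HH^{k-1}_{\mres V'}$, for $\sigma$-a.e.\ $x'$, so that $\bigl((\Pi_{V})_{*}|\mu|\bigr)(B)=\int_{V'}\bigl((\Pi_{\ell})_{*}|\mu_{x'}|\bigr)(B_{x'})\,d\sigma(x')=0$.

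For the case $k=1$, $\ell$ is a line in $\R^{M}$ and, with $\xi=(\tau,\eta)\in\ell\times\ell^{\perp}$, $\hat{\mu}$ vanishes on $\{\,|\tau|\ge\alpha,\ |\eta|\le\beta|\tau|\,\}$; the goal is $(\Pi_{\ell})_{*}|\mu|\ll\HH^{1}_{\mres\ell}$. A low-frequency splitting $\mu=P\mu+Q\mu$ with $\widehat{P\mu}$ supported in $\{|\tau|\le2\alpha\}$ disposes of $P\mu$: convolving $\mu$ in the $\ell$-direction with a Schwartz kernel whose Fourier transform is $1$ on $\{|\tau|\le2\alpha\}$ returns $P\mu$ and exhibits it as a finite measure absolutely continuous, with bounded density, along $\ell$, so $(\Pi_{\ell})_{*}|P\mu|\ll\HH^{1}_{\mres\ell}$. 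For $Q\mu$ one has $\supp\widehat{Q\mu}\subset\{\,|\tau|\ge\alpha,\ |\eta|>\beta|\tau|\,\}$, whose set of directions is a compact subset of $S^{M-1}$ bounded away from the directions of $\ell$, and hence is covered by finitely many proper open convex cones $\Gamma_{1},\dots,\Gamma_{N}$ (cones over spherical caps of radius $<\pi/2$). At this point one invokes the argument of \cite{RW}, which adapts the F.\ and M.\ Riesz theorem: taking a smooth partition of unity $\sum_{j}\chi_{j}=1$ subordinate to $\{\Gamma_{j}\}$ near $\supp\widehat{Q\mu}$, for each $j$ one analyses $\chi_{j}\widehat{Q\mu}$ through the holomorphic extension of $Q\mu$ into the tube $\R^{M}+i\,\Gamma_{j}^{*}$, the decisive input being the $L^{1}$-convergence of boundary values of functions of bounded characteristic on tube domains; this forces the ``$\Gamma_{j}$-component'' of $Q\mu$ to be absolutely continuous in the $\ell$-direction, and summing over $j$ yields $(\Pi_{\ell})_{*}|Q\mu|\ll\HH^{1}_{\mres\ell}$. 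Together with the $P\mu$ part this closes the induction.

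The main obstacle is precisely the case $k=1$. The cutoffs $\chi_{j}$ are order-zero Fourier multipliers, so $\chi_{j}\widehat{Q\mu}$ is the Fourier transform of a finite measure plus a weak-$L^{1}$ function, and one cannot simply split $Q\mu$ into pieces spectrally supported in the $\Gamma_{j}$; moreover the assertion concerns the total variation $|Q\mu|$, which is blind to cancellation, whereas $Q\mu$ itself may carry a singular part (already a band-limited measure on $\ell$ tensored with a point mass on $\ell^{\perp}$ satisfies the spectral hypothesis). So one genuinely has to run the Hardy-space / boundary-value machinery of \cite{RW}, keeping the projection onto $\ell$ in sight throughout; the reductions above are comparatively routine.
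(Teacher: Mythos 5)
Your reduction of \eqref{proj} to the absolute continuity $(\Pi_{V})_{*}|\mu|\ll\HH^{k}_{\mres V}$ is correct, and the inductive step (the inclusion $S_{V,\alpha,\beta}\subset S_{V',\alpha,\beta}$, the slicing of $\hat{\mu}$ along $V'$, and the disintegration identity) is sound modulo standard measurability care. But the proof has a genuine gap exactly where you yourself locate it: the case $k=1$ is never established. You reduce it to showing that the pieces of $Q\mu$ associated with the cones $\Gamma_{j}$ are absolutely continuous in the $\ell$-direction and then defer to ``the Hardy-space/boundary-value machinery of \cite{RW}'', while simultaneously explaining why the natural implementation fails (the multipliers $\chi_{j}$ do not split $Q\mu$ into measures with spectra in the $\Gamma_{j}$, and the total variation $|Q\mu|$ sees no such decomposition). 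As written, the decisive step is an appeal to an unproved black box, so the argument is incomplete.

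More importantly, the complex-analytic route overshoots: the statement has a short elementary proof, which is the one the paper gives. For each $a\in\R^{n}$ modulate, $\dm\tau_{a}\mu=e^{-2\pi i\langle a,\cdot\rangle}\dm\mu$, and push forward by $\Pi_{V}$. The Fourier transform of $(\Pi_{V})_{*}(\tau_{a}\mu)$ is the restriction of $\hat{\mu}$ to the affine subspace $V+a$, and the key geometric observation is that $S_{V,\alpha,\beta}\cap(V+a)$ is \emph{bounded} for every $a$: for fixed $V^{\perp}$-component $a''$ of $a$, the excluded cone contains every $\xi_{1}\in V$ with $|\xi_{1}|\geq\max(\alpha,|a''|/\beta)$. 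A finite measure on $V$ whose Fourier transform has compact support is absolutely continuous (bounded sets are Riesz sets), so $(\Pi_{V})_{*}(\tau_{a}\mu)(E')=0$ for every Lebesgue-null $E'\subset V$; since $(\Pi_{V})_{*}(\tau_{a}\mu)(E')=\widehat{\mu_{\mres E'\times V^{\perp}}}(a)$ for all $a$, the uniqueness theorem gives $\mu_{\mres E'\times V^{\perp}}=0$ and hence $|\mu|(E)=0$. No F.\ and M.\ Riesz theorem, no tube domains, and no induction on $k$ are needed; indeed, had you proved your base case $k=1$ by this modulation argument, you would find that the same argument disposes of all $k$ at once.
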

\begin{proof}
	Denote by $\pi(\mu)= (\Pi_{V})_{*} \mu$ the pushforward of $\mu$ by $\Pi_{V}$, that is the measure satisfying $\pi(\mu)(A) = \mu(A\times V^{\perp})$ for $A \subset V$. For $a \in \R^{n}$ let us define $\tau_{a}\mu$ by the formula $\dm \tau_{a} \mu = e^{-2\pi\langle a, \cdot\rangle} \dm \mu$. For $\xi = (\xi',0),~ t=(t',0) \in V \times V^{\perp}$ we have
	\begin{equation*}
	\pi(\tau_{a}\mu)~\hat{}~(\xi') = \int_{V}e^{-2\pi i \langle \xi', t' \rangle} \dm \pi(\tau_{a}\mu)(t') = \int_{\R^{n}}e^{-2\pi i \langle \xi, s \rangle} \dm \tau_{a}\mu(s) = \hat{\mu}(\xi + a).
	\end{equation*}
	Since $S_{V,\alpha,\beta} \cap (V+a)$ is a bounded set, it is also a Riesz set (folklore, cf. Example 3.7. in \cite{AW}), which implies absolute continuity of $\pi(\tau_{a}\mu)$ with respect to the Lebesgue measure on $V$. In particular, for $E' = \Pi_{V}(E)$ such that $\HH^{k}_{\mres V}(E')=0$ we get
	\begin{equation*}
		\tau_{a}\mu(E'\times V^{\perp}) = \pi(\tau_{a}\mu)(E') = 0.
	\end{equation*}
  Thus, for any $a\in \R^{n}$
  \begin{equation*}
  \mu_{\mres E'\times V^{\perp}}~\hat{}~(a) =\int_{E' \times V^{\perp}}e^{-2\pi i \langle a,s \rangle} d\mu(s) = \tau_{a} \mu(E'\times V^{\perp}) = 0,
  \end{equation*}
  and finally $\mu_{\mres E' \times V^{\perp}} \equiv 0$ by the uniqueness theorem.
\end{proof}

With a little help of the Besicovitch-Federer projection theorem (\cite{Mattila}, Theorem 18.1) we can adjust the above for dealing with rectifiability of singular sets.

\begin{thm}\label{BesFEd}
Let $\mu \in \M(\R^{n})$ be as in the previous theorem. Then, for any $k$--dimensional Borel set $E$ such that $\HH^{k}(E) < +\infty$ and $\mu(E) \neq 0$ there exists a $k$--rectifiable set $E_{r} \subset E$ such that $|\mu|(E \setminus E_{r}) = 0$. 
\end{thm}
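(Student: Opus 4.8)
The plan is to combine the projection result of Theorem~\ref{RW_proj} with the Besicovitch--Federer projection theorem. Recall that a $k$-dimensional set $E$ with $\HH^{k}(E)<+\infty$ decomposes as $E = E_{r}\cup E_{u}$, where $E_{r}$ is $k$-rectifiable and $E_{u}$ is purely $k$-unrectifiable (this is the standard decomposition, \cite{Mattila}, Theorem 15.6). The goal is then to show $|\mu|(E_{u})=0$; granting this, the rectifiable piece $E_{r}\subset E$ is the desired set. Since $E_{u}$ is purely $k$-unrectifiable with $\HH^{k}(E_{u})\leq \HH^{k}(E)<+\infty$, the Besicovitch--Federer projection theorem (\cite{Mattila}, Theorem 18.1) tells us that for almost every $k$-dimensional subspace $W\subset\R^{n}$ (with respect to the natural measure $\gamma_{n,k}$ on the Grassmannian) the orthogonal projection satisfies $\HH^{k}(\Pi_{W}(E_{u}))=0$.

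First I would fix the subspace $V$, the constants $\alpha,\beta$ and the measure $\mu$ as in Theorem~\ref{RW_proj}, so that $\supp(\hat\mu)\subset S_{V,\alpha,\beta}$. The subtlety is that Theorem~\ref{RW_proj} only gives us information about the single projection $\Pi_{V}$, whereas Besicovitch--Federer gives smallness of projections onto a large \emph{set} of subspaces that need not include $V$. To bridge this, I would exploit rotational freedom: for a rotation $\rho\in SO(n)$, the pushforward $\rho_{*}\mu$ has Fourier transform supported in $\rho(S_{V,\alpha,\beta}) = S_{\rho(V),\alpha,\beta}$, because $S_{V,\alpha,\beta}$ is defined purely in terms of the splitting $V\oplus V^{\perp}$ and is rotation-equivariant. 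Hence Theorem~\ref{RW_proj} applies to $\rho_{*}\mu$ with the subspace $\rho(V)$, yielding the implication $\HH^{k}_{\mres \rho(V)}(\Pi_{\rho(V)}(F))=0 \Rightarrow |\rho_{*}\mu|(F)=0$ for Borel $F$, equivalently $\HH^{k}(\Pi_{W}(E'))=0 \Rightarrow |\mu|(E')=0$ for $W=\rho(V)$. Since $\{\rho(V):\rho\in SO(n)\}$ ranges over all of the Grassmannian of $k$-planes and the image of Haar measure on $SO(n)$ under $\rho\mapsto\rho(V)$ is (up to normalization) the measure $\gamma_{n,k}$, the set of $W$ for which this implication holds has full $\gamma_{n,k}$-measure — indeed it is \emph{every} $W$.

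Now I would run the two facts against each other. By Besicovitch--Federer, the set $G$ of $k$-planes $W$ with $\HH^{k}(\Pi_{W}(E_{u}))=0$ has full $\gamma_{n,k}$-measure, so it is nonempty; pick any $W\in G$. By the previous paragraph (applied with the rotation $\rho$ carrying $V$ to $W$), $\HH^{k}(\Pi_{W}(E_{u}))=0$ forces $|\mu|(E_{u})=0$. Therefore $|\mu|(E\setminus E_{r}) = |\mu|(E_{u})=0$, and since $\mu(E)\neq 0$ we have $|\mu|(E_{r})\neq 0$, so $E_{r}$ is the required $k$-rectifiable subset. The main obstacle I anticipate is the bookkeeping in the rotation step: one must check carefully that $\Pi_{W}$ commutes with $\rho$ in the sense $\Pi_{W}\circ\rho = \rho\circ\Pi_{V}$ when $W=\rho(V)$, that pushforward of the Hausdorff measure on $V$ under $\rho$ is the Hausdorff measure on $W$ (true since $\rho$ is an isometry), and that the rotation-equivariance $\rho(S_{V,\alpha,\beta}) = S_{\rho(V),\alpha,\beta}$ holds verbatim from the definition. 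These are all routine once set up, but they are the place where an error could creep in. A minor point is that one should take $F = E_{u}$ Borel (or pass to a Borel hull of the same $\HH^{k}$-measure) so that Theorem~\ref{RW_proj} applies, which is harmless since $\HH^{k}(E_{u})<+\infty$.
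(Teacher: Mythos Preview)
There is a genuine gap in the rotation step. You correctly obtain that Theorem~\ref{RW_proj} applied to $\rho_*\mu$ gives, for every Borel $F$,
\[
\HH^k\big(\Pi_{\rho(V)}(F)\big) = 0 \ \Longrightarrow \ |\rho_*\mu|(F) = 0.
\]
But this is \emph{not} equivalent to the implication $\HH^k(\Pi_W(E')) = 0 \Rightarrow |\mu|(E') = 0$ for $W = \rho(V)$. To pass from $|\rho_*\mu|$ to $|\mu|$ you must substitute $F = \rho(E')$; then, using the very identity $\Pi_W\circ\rho = \rho\circ\Pi_V$ that you flag, the hypothesis becomes $\HH^k\big(\rho(\Pi_V(E'))\big) = \HH^k(\Pi_V(E')) = 0$, not $\HH^k(\Pi_W(E')) = 0$. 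In other words the rotation undoes itself: what you recover is exactly the original Theorem~\ref{RW_proj} for the fixed subspace $V$, and you gain no new projection directions. Consequently you cannot match up with Besicovitch--Federer, which only guarantees null projections for $\gamma_{n,k}$-a.e.\ $W$ and may well exclude $V$ itself.

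The paper's proof avoids this by not moving the measure at all. Instead it observes that the only property of $V$ used in the proof of Theorem~\ref{RW_proj} is that every affine slice $S_{V,\alpha,\beta}\cap(W+a)$ is bounded, and that this is an \emph{open} condition on $W$ in the Grassmannian. Hence there is a neighbourhood $\mathcal{O}_V$ of $V$, of positive Haar measure, on which the implication $\HH^k(\Pi_W(F)) = 0 \Rightarrow |\mu|(F) = 0$ holds for the \emph{same} measure $\mu$. Since $\mathcal{O}_V$ has positive measure and the Besicovitch--Federer exceptional set is null, one can pick $W\in\mathcal{O}_V$ with $\HH^k(\Pi_W(E_u)) = 0$, and the conclusion $|\mu|(E_u)=0$ follows.
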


\begin{proof}
	Let us begin with an observation that, for $k$--dimensional vector spaces \\ $W \in \G(k,\R^{n})$, satisfying the formula
	\begin{equation}
		\forall a\in\R^{n} \quad S_{V, \alpha, \beta} \cap (W+a) \ \ \mbox{is a bounded set}
	\end{equation}
	is an open condition in the natural topology on the Grassmannian $\G(k,\R^{n})$. Thus, the same proof as in Theorem~\ref{RW_proj} gives even stronger statement: There exist $\mathcal{O}_{V} \subset \G(k,\R^{n})$, a neighbourhood of $V$ of positive Haar measure, such that 
	\begin{equation}\label{rect}
		\HH^{k}_{\mres W}(\Pi_{W}(F)) = 0 \quad \Rightarrow \quad |\mu|(F) = 0
	\end{equation}
 	is true for $W \in \mathcal{O}_{V}$ and any $\mu$ satisfying $\supp(\mu) \subset S_{V, \alpha, \beta}$.
 	
 	By Theorem 18.1. in \cite{Mattila} we can decompose $E = E_{r} \cup E_{u}$ into disjoint sum of $k$--rectifiable and $k$--unrectifiable set. It suffies to apply the Besicovitch-Federer projection theorem and \eqref{rect} to obtain $|\mu|(E_{u}) = 0$.
\end{proof}

\section{Proofs}

For the convenience of the reader we begin with recalling some basic facts about the wave front set (see Chapter 8 in \cite{Hormander1} or \cite{Taylor}). 

If $\nu \in \mathcal{E}'(\R^{n})$, then we define $\Sigma(\nu)$ as the set of those $\xi \in \R^{n}\setminus \{0\}$, for which there is no conic neighbourhood $C$ such that
\begin{equation}\label{property1}
	\forall_{N \in \N} \exists_{C_{N}>0} \ \ |\hat{\nu}(\xi')| \leq C_{N}(1+|\xi'|)^{-N} \quad \mbox{for}\ \xi' \in C.
\end{equation}
For an arbitrary $\nu \in \mathcal{D}'(\R^{n})$ we define
\begin{equation}\label{property2}
	\W_{x}(\nu) = \bigcap_{\phi} \big{\{} \Sigma(\phi \nu) \colon \ \phi \in C^{\infty}_{c}(\R^{n}), \ \phi(x) \neq 0 \big{\}}.
\end{equation}
For any set $V\subset \R^{n}$ which is a conic neighbourhood of $\W_{x}(\nu)$, there exists a neighbourhood of $x$, say $U_{x}$ such that
\begin{equation}\label{property3}
	\W_{x}(\nu) \subset \Sigma(\phi \nu) \subset V \quad \mbox{for any} \ \phi \in C^{\infty}_{c}(U_{x}), \ \phi(x) \neq 0.
\end{equation}
If $\Phi \colon M \rightarrow N$ is a $C^{\infty}$--diffeomorphism between manifolds $M$ and $N$,  then the wave front set of the pullback $\Phi^{*}\nu$ is described by
\begin{equation}\label{pullback}
\W_{x}(\Phi^{*}\nu) = \{D\Phi^{t}(x)\eta \colon \eta \in \W_{\Phi(x)}(\nu)\}
\end{equation}
For any distribution $\nu \in \mathcal{D}'(M)$ defined on a manifold $M$, we set 
\[
\W(\nu) = \{(x,\xi) \in T^{*}M\setminus\{0\}: \xi \in \W_{x}(\nu)\}.
\]

\begin{proof}(of Theorem~\ref{microlocal}) Let us begin with proving the dimension bound. We may assume that our measure has a $k$--dimensional gap at every $x$. By the property~\eqref{property3} and the assumptions we have that for each $x \in \R^{n}$ there exists a neighbourhood $U_{x}$ such that $\Sigma(\phi\mu) \subset S_{V(x), \alpha(x), \beta(x)}$ for some $k$--dimensional space $V(x)$, some $\alpha(x), \beta(x) \in (0,+\infty)$, and any $\phi \in C^{\infty}_{c}(U_{x})$ such that $\phi(x) \neq 0$. Let us fix $\phi$.  After slightly changing $\alpha(x)$ and $\beta(x)$ if needed, we can construct a function $\eta \in C^{\infty}(\R^{n})$ such that
	\[ \eta(\xi)=
	\begin{cases}
	 0 \quad \mbox{on} \ \Sigma(\phi \mu), \\
     1  \quad \mbox{on} \ \R^{n}\setminus S_{V(x),\alpha(x), \beta(x)}.
	\end{cases}
	\]
From this and \eqref{property1} we obtain that $f(x) = (\eta \cdot \widehat{\phi \mu})~\check{} \in \mathcal{S}(\R^{n})$ and the measure $\phi \mu - f dx$ satisfies the assumptions of Theorem~\ref{RW}. Since modifications of measures by absolutely continuous ones do not have any influence on singular sets, we get that
\begin{equation}\label{localization}
\forall_{x} \exists \mbox{ a neighbourhood} \ U_{x} \ \mbox{s.t.}\ \dH(\phi \mu) \geq  k \quad \mbox{for} \ \phi \in C^{\infty}_{c}(U_{x}), \ \phi(x) \neq 0.
\end{equation}

Suppose by contradiction that there exists $F$ such that $\dH F < k$ and $\mu(F) \neq 0$. By the regularity of $\mu$, we may assume that $F$ is compact, which provides the existence of  a finite cover $F \subset \cup_{j}^{N} U_{x_{j}}$ with sets $U_{x_{j}}$ satisfying \eqref{localization}. Let $\{\phi_{j}\}_{j=1}^{N}$ be a smooth partition of unity inscribed in $\{U_{x_{j}}\}_{j=1}^{N}$. We have
\[
\mu(F) = \sum_{j=1}^{N} \phi_{j}\mu(F) = 0,
\]
which gives the first part of the theorem. To get the rectifiability part we simply replace the use of Theorem~\ref{RW} by Theorem~\ref{BesFEd} in the reasoning above.
\end{proof}

Before proving Theorem~\ref{strongUP} let us remark that, since diffeomorphisms are locally bi-Lipschitz, we can obtain full information about dimension and rectifiability of $\mu$ from the knowledge about pushforward measures $\Phi_{*}\mu$, provided that we have sufficiently many good maps $\Phi \colon \Sf \rightarrow \R^{2n-1}$. Having in mind Theorem~\ref{RW_proj} and the fact that the action of $\T$ defines a foliation of $\Sf$, we  can construct maps tailored to the proof of Theorem~\ref{UP}. 

Recall (\cite{Ale}, Subsection 1.4.) that the (real) tangent space $T_{z}\Sf$ can be decomposed into an orthogonal sum $T_{z}^{\C}\Sf \oplus \R i z$, where 
\[T_{z}^{\C}\Sf= \{\xi \in \C^{n} \colon \langle \xi,z \rangle_{\C^{n}} =0 \}. \]
We introduce coordinates $(\xi_{1},\xi_{2})\in T_{z}^{\C} \Sf \times \R \cong  T_{z} \Sf$ accordingly to this splitting: $\xi = \xi_{1} + \xi_{2}iz$.

\begin{lem}\label{technical}
Suppose that $\Phi \colon \Sf \rightarrow \R^{n}$ is a smooth diffeomorphism and $\mu \in \M(\Sf)$. Let $\nu_{1}=\Phi_{*}\mu$ (understood as a pushforward measure) and $\nu_{2} = (\Phi^{-1})^{*}\mu$ (understood as a pullback of a distribution). Then $\nu_{1}$ and $\nu_{2}$ are mutually absolutely continuous and $\W(\nu_{1}) = \W(\nu_{2})$.
\end{lem}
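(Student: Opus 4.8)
The plan is to show that the pushforward $\nu_1 = \Phi_*\mu$ and the pullback $\nu_2 = (\Phi^{-1})^*\mu$ differ only by multiplication by a smooth nowhere-vanishing density, namely the Jacobian of $\Phi^{-1}$, and then to invoke the standard facts that (i) multiplying a distribution by a smooth nowhere-vanishing function changes neither its wavefront set nor (when the distribution is a measure) its null sets, and (ii) absolute continuity in both directions follows immediately from the density being bounded above and below on compact sets (here $\Sf$ is compact, so this is automatic).

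First I would unwind the two definitions concretely. Writing $\Psi = \Phi^{-1}$, the pullback $\nu_2 = \Psi^*\mu$ is the distribution acting on a test function $\varphi \in C_c^\infty(\R^n)$ by $\langle \nu_2, \varphi\rangle = \langle \mu, (\Psi^{-1})^*\varphi \cdot |\det D\Psi^{-1}|^{-1}\rangle$ — or, put the other way, the change-of-variables formula identifies $\nu_2$ with the measure $|\det D\Psi|\,\Phi_*\mu = J\,\nu_1$, where $J(x) = |\det D\Psi(x)| = |\det (D\Phi(\Psi(x)))|^{-1}$ is a smooth, strictly positive function on $\R^n$ (restricted to the image $\Phi(\Sf)$; since $\Phi$ is a diffeomorphism of manifolds one works in local charts, but the conclusion is the same). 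So $\nu_1$ and $\nu_2$ are genuinely two measures on $\Phi(\Sf)$ related by $\dm\nu_2 = J\,\dm\nu_1$ with $0 < c \le J \le C$ locally. Mutual absolute continuity is then immediate: $\nu_1(A)=0 \iff \nu_2(A)=0$ for Borel $A$.

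Next, for the wavefront sets: the equality $\W(\nu_1)=\W(\nu_2)$ follows from the locality of $\W$ together with the fact that $\W_x(g\nu) = \W_x(\nu)$ whenever $g \in C^\infty$ and $g(x)\neq 0$. Concretely, given a point $x$ and a cutoff $\phi \in C_c^\infty$ with $\phi(x)\neq 0$ supported where $J$ is smooth and nonzero, one has $\phi\nu_2 = (\phi J)\nu_1$ and $\phi\nu_1 = (\phi J^{-1})\nu_2$; both $\phi J$ and $\phi J^{-1}$ are smooth and compactly supported, and multiplication by a compactly supported smooth function only shrinks (or preserves) the set $\Sigma$ of \eqref{property1} — applying this in both directions at a common cutoff gives $\Sigma(\phi\nu_1) = \Sigma(\phi\nu_2)$, hence $\W_x(\nu_1)=\W_x(\nu_2)$ by \eqref{property2}. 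Since this holds at every $x \in \Phi(\Sf)$ (and both measures are supported there), the full wavefront sets coincide.

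The only genuinely delicate point is bookkeeping the diffeomorphism between a manifold and a subset of $\R^n$: strictly, $\Phi(\Sf)$ is a submanifold of $\R^n$, not open, so "pushforward measure'' and "pullback distribution on $\R^n$'' must be interpreted via local parametrizations, and one should check that the density $J$ relating them is the same smooth positive function in every chart — this is just the cocycle property of Jacobians and causes no real trouble. I expect this chart-matching to be the main (mild) obstacle; everything else is the two standard facts recalled above, applied in both directions.
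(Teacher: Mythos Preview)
Your approach is exactly the paper's: the authors' proof is the single line ``It follows from the formula $\dm \nu_{2} = |\det D\Phi|\, \dm \nu_{1}$,'' and everything you wrote is the natural unpacking of that line (the smooth strictly positive Jacobian gives both mutual absolute continuity and, via the standard fact $\W_x(g\nu)=\W_x(\nu)$ for $g$ smooth and nonvanishing at $x$, equality of wavefront sets). Your Jacobian factor is the reciprocal of the paper's, which reflects a convention choice in defining distributional pullback and is immaterial to the conclusion.
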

\begin{proof}
It follows from the formula $\dm \nu_{2} = |\det D\Phi| \dm \nu_{1}$.
\end{proof}
\begin{lem}\label{maps}
Suppose that $E\subset\Sf$ satisfies $\HH^{2n-2}(E) = 0$ . Then, for any  $\mu \in \M(\Sf)$ and $z_{0} \in \Sf$ there exists an open neighbourhood $U_{z_{0}} \subset \Sf$ and a smooth diffeomorphism $\Phi \colon U_{z_{0}} \to V \subset T_{z_{0}}\Sf$, such that 
\begin{enumerate}
	\item $\Phi(z_{0}) = 0$ and $\W_{0}(\Phi_{*}\mu) = \W_{z_{0}}(\mu)$,
	\item $\HH^{2n-2}(\Pi_{V} \Phi(\T \cdot E \cap U_{z_{0}})) = 0$ for $V=T^{\C}_{z_{0}} \Sf$.
\end{enumerate}
\end{lem}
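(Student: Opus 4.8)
The plan is to build $\Phi$ as a composition of two maps adapted to the $\T$-foliation: first straighten the orbit of the circle action through $z_0$, then project the remaining directions linearly onto $T^{\C}_{z_0}\Sf$. Concretely, since the $\T$-action $t\mapsto e^{it}z$ generates a free flow near $z_0$ (the generating vector field $iz$ is nonvanishing and tangent to $\Sf$), I would pick a small $(2n-2)$-dimensional transversal disc $D\subset\Sf$ through $z_0$ with $T_{z_0}D = T^{\C}_{z_0}\Sf$, and use the flow-box coordinates $(w,t)\in D\times(-\delta,\delta)\mapsto e^{it}w$. Composing with a linear identification of $D$ with a neighbourhood of $0$ in $T^{\C}_{z_0}\Sf$ and with $t\mapsto t\cdot(iz_0)$ in the $\R i z_0$ factor gives a smooth diffeomorphism $\Phi\colon U_{z_0}\to V\subset T_{z_0}\Sf$ with $\Phi(z_0)=0$. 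Property (1) on wave front sets is then immediate from the transformation rule \eqref{pullback} combined with Lemma~\ref{technical}: pushing forward by a diffeomorphism that fixes $z_0\mapsto 0$ carries $\W_{z_0}(\mu)$ onto $\W_0(\Phi_*\mu)$ (the precise cotangent identification is absorbed into the choice of linear coordinates, which one is free to make so that $D\Phi(z_0)$ is the identity on the splitting $T^{\C}_{z_0}\Sf\oplus\R i z_0$).

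For property (2), the key point is that $\Phi$ is engineered so that the $\T$-orbit directions sit entirely in the $\R i z_0$ factor, which $\Pi_V$ kills. More precisely, $\Phi(\T\cdot E\cap U_{z_0})$ is, in the flow-box coordinates, a union of vertical segments $\{w\}\times I_w$ over those $w\in D$ that meet $E$ after circling; hence $\Pi_V\Phi(\T\cdot E\cap U_{z_0})$ is contained in the image under the linear identification $D\cong T^{\C}_{z_0}\Sf$ of the set $\{w\in D: \T\cdot w \cap E\cap U_{z_0}\neq\emptyset\}$. This set is the projection of $E\cap(\T\cdot U_{z_0})$ along the orbits onto the transversal $D$, which is a Lipschitz map on a compact piece, so it cannot increase $\HH^{2n-2}$ measure. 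Since $\HH^{2n-2}(E)=0$ and $E\cap(\T\cdot U_{z_0})\subset E$, we conclude $\HH^{2n-2}$ of the projected set is $0$, and the linear identification is bi-Lipschitz, giving (2).

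The main obstacle I anticipate is bookkeeping rather than anything deep: one must check that the orbit-projection map $z\mapsto$ (its transversal coordinate) is genuinely Lipschitz on the relevant compact neighbourhood — this uses that $iz$ is bounded away from zero and the flow-box is taken small, so the implicit-function-theorem chart has bounded derivatives — and one must verify the wave-front identity carefully because $\Phi$ is only a local diffeomorphism on $U_{z_0}$; this is harmless since $\W_{z_0}(\mu)$ depends only on the germ of $\mu$ at $z_0$ and the formula \eqref{pullback} is local. A minor subtlety is that $\T\cdot E\cap U_{z_0}$ need not equal $\T\cdot(E\cap U_{z_0})$, but replacing $E$ by $E\cap(\T\cdot U_{z_0})$ throughout (still an $\HH^{2n-2}$-null set, being a subset of $E$) removes the issue. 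Once these points are in place, the two assertions follow as above.
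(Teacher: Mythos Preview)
Your proposal is correct and follows essentially the same strategy as the paper: build a chart near $z_0$ that straightens the $\T$-foliation so that orbits become vertical segments over $T^{\C}_{z_0}\Sf$, verify $D\Phi(z_0)=\mathrm{Id}$ so that \eqref{pullback} and Lemma~\ref{technical} give (1), and use a Lipschitz argument for (2). The only cosmetic difference is that the paper realises the chart as orthogonal projection onto $T_{z_0}\Sf$ followed by a leaf-straightening map $\gamma$, whereas you use flow-box coordinates directly; your justification of (2) via the Lipschitz orbit-projection onto the transversal is in fact more explicit than the paper's one-line ``$\Phi$ and $\Pi_V$ are Lipschitz'', which tacitly relies on the same observation that $\Pi_V\circ\Phi$ collapses orbits.
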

\begin{proof}
Let $\psi: \Sf\cap B(z_{0},\epsilon) \rightarrow T_{z_{0}}\Sf$ be the orthogonal projection onto $T_{z_{0}}\Sf$. Here we choose small $\epsilon$ so that $\psi$ was a bi-Lipschitz diffeomorphism. As we have already mentioned, since $\T$ acts on $\Sf$ (freely) by multiplication, $\Sf$ is foliated by leaves of the form $\{e^{it}\xi\}_{t \in [0,2\pi]}$ (see Theorem 11.3.9. in \cite{Candel}). Thus, $\im \psi$ is foliated by leaves $\{\psi(e^{it}\xi)\}$.

Let us define a function $\gamma$ on $\im \psi$ so that $p=(\xi_{1},\xi_{2}) \in \im \psi \subset T_{z_{0}} \Sf$ is mapped to a point $(\tilde{\xi_{1}},\xi_{2})$, where $(\tilde{\xi_{1}}, 0)$ is the intersection point of the leaf $\{\psi(e^{it}\xi)\}$ containing $p$ with $T_{z_{0}}^{\C}\Sf$. If $\epsilon$ is small enough, then $\gamma$ is well defined as the leaves of foliation are transversal to $T_{z_{0}}^{\C}\Sf$ near $0$. Moreover, $\gamma$ is a diffeomorphism and $\Phi = \gamma \circ \psi$ is the desired map. Indeed, point $(2)$ is satisfied because $\Phi$ and $\Pi_{V}$ are Lipschitz. To prove $(1)$, let us observe that $D\psi(z_{0}) = Id$ (since $D\psi^{-1}(0) = Id$) and $D\gamma(0) = Id$, as we have $\frac{d}{d\xi_{2}}\gamma(0) = (0,1)$ and $\gamma_{| T^{\C}_{z_{0}}\Sf} = Id_{|T^{\C}_{z_{0}}\Sf}$. It remains to use the previous lemma and \eqref{pullback}.
\end{proof}

\begin{proof} (of Theorem~\ref{UP})
	We essentially follow the steps of the proof of Theorem 2.1. in \cite{Br}. Our aim is to show that at each point $z$, $\W_{z}(\mu)$ has a $(2n-2)$--dimensional gap given by $T^{\C}_{z}\Sf$.
	Let us identify $T\Sf$ with $T^{*}\Sf$ using the Euclidean metric on $\R^{2n}$. Take two commuting, first order pseudodifferential operators
	\[
	T_{1} f(z) = \frac{1}{i} \frac{d}{dt} f(e^{it}z) \Big{|}_{t=0}
	\]
	and
	\[
	T_{2} = \sqrt{-\Delta_{\Sf}+(n-1)^{2}Id}-(n-1)Id.
	\]
	The symbol $\Delta_{\Sf}$ stands for the spherical Laplacian. The eigenspaces of $\Delta_{\Sf}$ are $\mathcal{H}(j) = \bigoplus_{p+q=j} H(p,q)$ and the corresponding eigenvalues $\lambda_{j} = -j(j+2n-2)$.  Principal symbols  of $T_{1}$ and $T_{2}$ are
	\[
	\sigma(T_{1})(z,\xi_{1},\xi_{2}) = \xi_{2}
	\]
    and
	\[
	\sigma(T_{2})(z,\xi_{1},\xi_{2}) = c\sqrt{\lvert \xi_{1}\rvert^{2}+\lvert \xi_{2}\rvert^{2}}
	\]
	for some constant $c$. The space $H(p,q)$ can be described as the common eigenspace of $T_{1}$ and $T_{2}$ with eigenvalues $p-q$ and $p+q$, respectively. 
	Let $\chi \in C^{\infty}(\R^{2})$ be any function having the following properties:
	\begin{enumerate}
	\item $\chi(x,y)$ is $0$--homogeneous on $\kappa(\epsilon)$,
	\item $\chi(x,y)\neq0$ on $\kappa(\epsilon)$,
	\item $\chi(x,y) = 0$ outside $\kappa(\epsilon)\cup B(0,\delta)$, for some small $\delta$.
	\end{enumerate}
	By the functional calculus from \cite{stri} (cf. also \cite{Taylor}, Chapter 12), the operator 
	\[
	T_{3} = \chi\Big{(} \frac{T_{1}+T_{2}}{2}, \frac{T_{2}-T_{1}}{2} \Big{)},
	\]
	defined by the spectral theorem, is equal to a $0$--th order pseudo-differential operator with principal symbol
	\begin{equation}\label{principal}
		\sigma(T_{3})(z,\xi_{1},\xi_{2}) = \chi\Big{(} \frac{c\sqrt{\lvert \xi_{1}\rvert^{2}+\lvert \xi_{2}\rvert^{2}} + \xi_{2}}{2}, \frac{c\sqrt{\lvert \xi_{1}\rvert^{2}+\lvert \xi_{2}\rvert^{2}}-\xi_{2}}{2} \Big{)}.
	\end{equation}
	Moreover, by the assumptions we have
	\begin{equation}\label{Tfree}
	T_{3}\mu = \sum_{p,q \geq 0} \chi( p,q)\pi_{p,q}\mu \in C^{\infty}(\Sf).
	\end{equation}
	Theorem 18.1.28 from \cite{Hormander3} says that
	\begin{equation}
 	\W(\mu) \subset \W(T_{3}\mu) \cup \Char(T_{3}).
	\end{equation}
	From this and \eqref{Tfree} we obtain that $\W(\mu)$ is contained in $\Char(T_{3})$, i.e. the set of $(z,\xi) \in T^{*}\Sf \setminus \{0\}$ such that
	\begin{equation}
	\sigma(T_{3})(z,\xi_{1},\xi_{2}) = 0
	\end{equation}
	(we used homogenity of the symbol). It suffices to show that 
	\[
	\Char(T_{3})(z,\cdot) \subset S_{T_{z}^{\C}(\Sf),\alpha, \beta}
	\]
	for parameters $\alpha, \beta$ depending on $\epsilon$ only. Suppose by contradiction that there exists a sequence $\xi^{(n)} = \xi^{(n)}_{1}+iz\xi^{(n)}_{2} \in \Char(T_{3})(z,\cdot)$ such that $|\xi^{(n)}_{2}| \leq \alpha_{n}|\xi^{(n)}_{1}|$ with $\alpha_{n} \downarrow 0$. This means that
	\[
	\frac{y_{n}}{x_{n}} := \frac{c\sqrt{\lvert \xi_{1}^{(n)}\rvert^{2}+\lvert \xi_{2}^{(n)}\rvert^{2}} - \xi_{2}^{(n)}}{c\sqrt{\lvert \xi_{1}^{(n)}\rvert^{2}+\lvert \xi_{2}^{(n)}\rvert^{2}} + \xi_{2}^{(n)}} \to 1
	\]
	as $n\to +\infty$ and  $\chi (\frac{x_{n}}{2},\frac{ y_{n}}{2}) = 0$. This cannot hold by the definition of $\chi$.
	
	To finish the proof, it remains to use Lemma~\ref{maps} in combination with Theorem~\ref{RW_proj}, Theorem~\ref{BesFEd} and an argument analogous to the covering argument from the proof of Theorem~\ref{microlocal}.
 \end{proof}

\begin{proof}(of Corollary~\ref{pospluri})
  By Theorem~\ref{UP}, we may assume that $E$ is rectifiable. Whitney extension theorem (\cite{fed}, Theorem 3.1.16.) says that $E$ is contained in a countable union of $(2n-2)$--dimensional $C^{1}$--submanifolds, thus in view of Proposition 3.3.1. from \cite{Ale} we have $\mu(E) = 0$.
\end{proof}

\section{Examples}

\begin{example}
Let $\mu \in \M(\R^{n})$ be the uniform Hausdorff measure on a smooth $k$--dimensional manifold $M$. Then, at each $x\in M$, $\W_{x}(\mu)$ has a $k$-dimensional gap given by the tangent space.
\end{example}

\begin{example}(cf. \cite{Br2}, p. 140)
For any $\xi \in \Sf$, the one-dimensional Hausdorff measure on the set $\T \cdot \{\xi\}$ has its spectrum equal to $\{(p,p) \colon p\in \Z_{+}\}$. Thus, in Theorem~\ref{UP}, the semi-axis $\{(x,x): x \in (0,+\infty)\}$ cannot be replaced by any other one.
\end{example}

\begin{example}\label{dpluri}
Our results can be applied to the study of the so-called $d$--pluriharmonic measures, introduced in \cite{Do}. They are those measures from $\M(\Sf)$, whose spectrum lies inside
\[
 \{(p,q) \in \Z_{+}^{2}\colon (p-d)(q-d)=0 \ \mbox{and} \ \ p,q\geq d\}.
\]
In particular, the above class contains the classical pluriharmonic measures as $0$--pluriharmonic measures.
\end{example}

We would like to finish the article by leaving the following open problem:

\begin{qu}
Is the dimension bound from Corollary~\ref{UP2} sharp?
\end{qu}

\bibliography{biblio} 
\bibliographystyle{alpha}

\end{document}